   \newcommand{\Z}{\mathbb{Z}}  		% The integers
   \newcommand{\Q}{\mathbb{Q}}  		% The rationals
   \newcommand{\C}{\mathbb{C}}  		% The complex numbers
   \theoremstyle{plain} 				
   \newtheorem{theorem}{Theorem}
   \newtheorem{lemma}{Lemma}
   \theoremstyle{definition}
   \newtheorem{definition}{Definition}
   \newtheorem{example}{Example}
   \theoremstyle{remark}
   \newtheorem*{remark}{Remark}
\begin{document}

\title{Hecke-Symmetry and Rational Period Functions}

%\titlerunning{Short form of title}        % if too long for running head

\author{Dedicated to the memory of my teacher Marvin Knopp,\\ 
	who corrected many of my mistakes.\\[14pt]
	Wendell Ressler
	  Department of Mathematics \\
	  Franklin \& Marshall College \\
	  Lancaster, PA 17604-3003}

\maketitle

\begin{abstract}
In this paper
we continue work in the direction
of a characterization of 
rational period functions on the Hecke groups.
We examine the role that
Hecke-symmetry of poles
plays in this setting,
and pay particular attention
to non-symmetric irreducible systems of poles
for a rational period function.
This gives us a new expression
for a class of
rational period functions
of any positive even integer weight
on the Hecke groups.
We illustrate these properties 
with examples 
of specific rational period functions.
We also correct the wording of a theorem from 
an earlier paper.
\end{abstract}

\section{Introduction}
\label{sec:intro}
Marvin Knopp first 
defined and studied rational period functions (RPFs)
for automorphic integrals 
\cite{MR0344454}
and gave the first example
of an RPF
with nonzero poles 
\cite{MR0485700}.

Knopp \cite{MR623004}, 
Hawkins\footnote{``On rational period functions for the modular group,'' unpublished manuscript.} 
and Choie and Parson 
\cite{MR1045397,MR1103673} 
took the main steps toward an explicit characterization of RPFs on 
the modular group \(\Gamma(1)\).
Ash \cite{MR980298} 
gave an abstract
characterization, 
and then
Choie and Zagier \cite{MR1210514} and
Parson \cite{MR1210515} provided 
a more explicit characterization 
of the RPFs on \(\Gamma(1)\).
The explicit characterization uses continued fractions 
to establish a connection between the poles of 
RPFs and binary quadratic forms.

Schmidt \cite{MR1378572} generalized Ash's work, 
giving an abstract characterization of RPFs 
on any finitely generated Fuchsian group of the 
first kind with parabolic elements, 
a class of groups which includes the Hecke groups.
Schmidt \cite{MR1219337} 
and Schmidt and Sheingorn \cite{MR1362251} 
took steps 
toward an explicit characterization of RPFs 
on the Hecke groups 
using generalizations of  
classical continued fractions and binary quadratic forms.
This author continued that work 
in \cite{MR1876701} and \cite{MR2582982}.

The nonzero poles of RPFs on \( \Gamma(1) \)
have unique algebraic conjugates
because they are quadratic irrational numbers.
The nonzero poles of RPFs on a Hecke group are 
algebraic numbers,
usually
of degree greater than \( 2 \).
In \cite{MR1876701} and \cite{MR2582982}
we define the unique
Hecke-conjugate for any 
nonzero pole of an RPF on a Hecke group.
In this paper
we explore the roles that
Hecke-conjugation
and the related Hecke-symmetry of pole sets
play in constructing 
RPFs on the Hecke groups.

\section{Background}
\label{sec:background}

In this section
we list definitions and results 
necessary for working with 
Hecke-symmetry and rational period functions.
More details can be found in
\cite{MR1876701} and \cite{MR2582982}.

\subsection{Hecke groups and Hecke-symmetry}
\label{subsec:Hecke_groups}

Throughout this paper we
fix an integer \( p \geq 3 \)
and \( \lambda = \lambda_{p} = 2\cos(\pi/p) \).
Put \(S = S_{\lambda} 
   = \bigl( \begin{smallmatrix} 1&\lambda\\ 0&1 \end{smallmatrix} \bigr) \),
\(T = \bigl( \begin{smallmatrix} 0&-1\\ 1&0 \end{smallmatrix} \bigr)\),
and \(I = \bigl( \begin{smallmatrix} 1&0\\ 0&1 \end{smallmatrix} \bigr)\).
We define \emph{Hecke group}
\begin{equation*}
G_{p} = G(\lambda_{p}) = \langle S,T \rangle / \{\pm I\}.
\end{equation*}
If we put 
\( U = U_{\lambda} = S_{\lambda}T 
	= \bigl( \begin{smallmatrix} \lambda & -1\\ 1 & 0 \end{smallmatrix} \bigr) \)
we can write the group relations of \( G_{p} \) as
\( T^{2}=U^{p}=I\).
It is well-known that 
\(G_{p} = \mathrm{PSL}(2,\Z[\lambda_{3}]) = \Gamma(1)\);
however for the other Hecke groups 
\(G_{p} \subsetneqq \mathrm{PSL}(2,\Z[\lambda_{p}])\).
The entries of elements of \( G_{p} \)  are in 
\( \Z [\lambda_{p}] \), 
the ring of algebraic 
integers for \( \Q(\lambda_{p}) \).

Members of \( G_{p} \)
act on the Riemann sphere 
as M\"{o}bius transformations.
An element 
\(M = \bigl( \begin{smallmatrix} a&b\\ c&d \end{smallmatrix} \bigr) \in G_{p}\) 
is 
\emph{hyperbolic} if \( \vert a+d \vert >2 \),
\emph{parabolic} if \( \vert a+d \vert =2 \),
and \emph{elliptic} if \( \vert a+d \vert <2 \). 
We designate fixed points accordingly.
Hyperbolic M\"{o}bius transformations each have 
two distinct real fixed points.

The \emph{stabilizer} in \( G_{p} \) of a complex number \( z \),
\( \mathrm{stab}(z) = \{M \in G_{p} : Mz=z\} \),
is a cyclic subgroup of \( G_{p} \).
We define the
\emph{Hecke-conjugate} of any hyperbolic fixed point of
\( G_{p} \)
to be the other fixed point of the elements in its stabilizer.
We denote the Hecke-conjugate of 
\( \alpha \)
by
\( \alpha^{\prime} \).
If \( R \) is a set of hyperbolic fixed points of 
\( G_{p} \) we write
\( R^{\prime} = \{x^{\prime} : x \in R\} \).
We say that a set \( R \) has 
\emph{Hecke-symmetry}
if \( R = R^{\prime} \).
We note that
\( R \cup R^{\prime} \)
has Hecke-symmetry
for any set of hyperbolic points \( R \).

\subsection{\( \lambda \)-binary quadratic forms}
\label{subsec:BQFss}

Hawkins\footnote{``On rational period functions for the modular group,'' unpublished manuscript.} 
pointed out a deep connection between 
rational period functions 
for the modular group and classical
binary quadratic forms.
We exploit a similar connection
between 
rational period functions 
for Hecke groups
and generalized binary quadratic forms.

We let 
\( \mathcal{Q}_{p,D} \) denote the set
of binary quadratic forms
\[ Q(x,y) = Ax^{2}+Bxy+Cy^{2}, \]
with coefficients in  
\( \Z[\lambda_{p}] \),
and discriminant \( D \).
We also denote a form by 
\( Q=[A,B,C] \) and refer to it as a
\( \lambda \)-BQF.
We restrict our attention to indefinite forms.

We define an action
of \( G_{p} \)
on \( \mathcal{Q}_{p,D} \) by
\( \left(Q \circ M \right)(x,y) = Q(a x + b y, c x + d y) \)
for 
\( Q \in \mathcal{Q}_{p,D} \) 
and
\( M = \bigl( \begin{smallmatrix} a &b \\ c & d 
	      \end{smallmatrix} \bigr) \in G_{p} \).
This action preserves the discriminant
and partitions \( \mathcal{Q}_{p,D} \) into equivalence 
classes of forms.

In \cite{MR2582982}
we describe a one-to-one correspondence between hyperbolic fixed points
of \( G_{p} \) and certain 
\( \lambda \)-BQFs.
We first use a variant of Rosen's 
\( \lambda \)-continued fractions 
\cite{MR0065632,MR1219337}
to map every hyperbolic point 
\( \alpha \) to the unique primitive hyperbolic element
\( M_{\alpha} \in G_{p} \) with positive trace
that has \( \alpha \) as an attracting fixed point.
This mapping associates Hecke-conjugates with inverse elements in the Hecke group, 
that is,
\( M_{\alpha^{\prime}} = M_{\alpha}^{-1} \).
We also map every hyperbolic element
\( M = \left( \begin{smallmatrix} a&b\\ c&d \end{smallmatrix} \right) 
\in G_{p} \) 
with positive trace to 
a unique indefinite \( \lambda \)-BQF
\( Q \) such that 
the roots of
\( Q(z,1) = cz^{2} + (d-a)z - b \)
are the fixed points
of \( M \).
Every domain element for this map
is hyperbolic, 
so we call the images \emph{hyperbolic} \( \lambda \)-BQFs.

The composition of the two maps described above 
associates every hyperbolic fixed point
\( \alpha \) with a unique hyperbolic
\( \lambda \)-BQF 
\( Q_{\alpha} \).
These mappings are both injective,
and the inverse of the composition 
associates every 
hyperbolic
quadratic form
\( Q = [A,B,C] \) with the hyperbolic number
\( \alpha_{Q} = \frac{-B+\sqrt{D}}{2A} \),
where 
\( D \) is the discriminant of \( Q \).

Every equivalence class of \( \lambda \)-BQFs contains 
either all hyperbolic forms or no hyperbolic forms,
so we may label equivalence classes themselves as hyperbolic or 
non-hyperbolic.

If \( \mathcal{A} \) denotes an equivalence 
class of \( \lambda \)-BQFs, we put
\( -\mathcal{A} 
     = \left\{-Q \vert Q \in \mathcal{A}\right\} \).
Then \( -\mathcal{A} \) is another equivalence 
class of forms, 
not necessarily distinct from \( \mathcal{A} \).
If \( \mathcal{A} \) is hyperbolic, so is \( -\mathcal{A} \), 
and 
the numbers associated with the forms in 
\( -\mathcal{A} \) 
are the Hecke-conjugates of the numbers associated with the forms in 
\( \mathcal{A} \).

We call a hyperbolic \( \lambda \)-BQF \( Q = [A,B,C] \) 
\( G_{p} \)-\emph{simple} if
\( A > 0 > C \).
If \( Q \) is a simple 
\( \lambda \)-BQF, we say that 
the associated hyperbolic number
\( \alpha_{Q} \) is a \( G_{p} \)-\emph{simple} number.
A hyperbolic number
\( \alpha \) is simple 
if and only if 
\( \alpha^{\prime}<0<\alpha \).

If \( \mathcal{A} \) is a hyperbolic equivalence class of 
\( \lambda \)-BQFs we write
\( Z_{\mathcal{A}} 
    = \left\{x : Q_{x} \in \mathcal{A}, Q_{x} \textrm{ simple} \right\} \).
These sets are nonempty;
every hyperbolic equivalence class 
of \( \lambda \)-BQFs contains at 
least one simple form.

\subsection{Rational period functions}
\label{subsec:RPFs}

For 
\( M = \left( \begin{smallmatrix} * & * \\ c & d \end{smallmatrix} \right) \in G_{p} \)
and \( f(z) \) a function of a complex variable,
we define the
\emph{weight \( 2k \) slash operator}
\( f \mid_{2k} M = f \mid M \) to be
\begin{equation*}
\left( f \mid M \right)(z) = (cz+d)^{-2k}f(Mz).
\end{equation*}
\begin{definition}
Fix \( p \geq 3 \) and let \( k \in \Z^{+} \).
A 
\emph{rational period function (RPF) of weight \( 2k \) for \( G_{p} \)} 
is a rational function that satisfies the relations
\begin{equation}
\label{eq:first_relation}
q + q \mid T = 0,
\end{equation}
and
\begin{equation}
\label{eq:second_relation}
q + q \mid U + \cdots + q \mid U^{p-1} = 0.
\end{equation}
\end{definition}
This definition
is equivalent to Marvin Knopp's original definition
of rational period functions
for automorphic integrals
\cite{MR0344454}.

For any rational period function 
of weight \( 2k \) for \( G_{p} \)
we let \( P(q) \) denote the set of poles of \( q \).
Hawkins 
defined an
\emph{irreducible system of poles (ISP)},
which is the minimal set of poles
forced to occur together
by the relations 
\eqref{eq:first_relation} and \eqref{eq:second_relation}.

The poles of an RPF on \( G_{p} \) are all real,
and the nonzero poles are all 
hyperbolic fixed points of \( G_{p} \).
The set of positive poles 
in an ISP
is \( \mathcal{Z}_{\mathcal{A}} \)
for some hyperbolic equivalence class \( \mathcal{A} \).
The ISP associated with \( \mathcal{A} \)
is
\begin{align*}
P_{\mathcal{A}}
& = \mathcal{Z}_{\mathcal{A}} \cup T\mathcal{Z}_{\mathcal{A}} \\
& = \mathcal{Z}_{\mathcal{A}} \cup \mathcal{Z}_{-\mathcal{A}}^{\prime}.
\end{align*}

If \( q \) is an RPF of weight \( 2k \) on \( G_{p} \) 
with a pole \emph{only} at zero,
then 
\( q \) must have the form
\cite{MR774398}
\begin{equation}
  q_{k,0}(z) = 
    \begin{cases}
      a_{0}(1-z^{-2k}), & \text{if } 2k \neq 2, \\
      a_{0}(1-z^{-2}) + b_{1}z^{-1}, & \text{if } 2k = 2.
    \end{cases}
    \label{eq:RPFPoleatZero}
\end{equation}
If \( \alpha \neq 0 \)
occurs as a pole of an RPF \( q \)
of weight \( 2k \in 2\Z^{+} \)
on \( G_{p} \),
we define
\begin{equation}
q_{k,\alpha}(z)
= PP_{\alpha} 
  \left[ \frac{D^{k/2}}{Q_{\alpha}(z,1)^{k}} \right]
= PP_{\alpha} 
         \left[ \frac{(\alpha-\alpha^{\prime})^{k}}                                       
	 {(z-\alpha)^{k}(z-\alpha^{\prime})^{k}}
                 \right],
    \label{eq:PPatalpha}
\end{equation}
where \(D\) is the discriminant of 
the \( \lambda \)-BQF \( Q_{\alpha} \).
With this we have
the following expression,
valid for any RPF on \( G_{p} \)
\cite[p. 292]{MR1876701}.
\begin{theorem}
\label{thm:AnyRPFdiscription}
    An RPF of weight \( 2k \in 2\Z^{+} \) on \( G_{p} \) is of the form
    \begin{equation}
	\label{eq:RPFform}
        q(z) = \sum_{\ell=1}^{L} C_{\ell} 
    	    \left(\sum_{\alpha \in Z_{\mathcal{A}_{\ell}}}
	            q_{k,\alpha}(z) 
		- \sum_{\alpha \in Z_{-\mathcal{A}_{\ell}}}
		    q_{k,\alpha^{\prime}}(z)
	    \right) 
	    + c_{0}q_{k,0}(z)
	    + \sum_{n=1}^{2k-1}\frac{c_{n}}{z^{n}},
    \end{equation}
    where each
    \( \mathcal{A}_{\ell} \) is a \( G_{p} \)-equivalence class of 
        \( \lambda \)-BQFs,
    \( Z_{\mathcal{A}_{\ell}} \) is the cycle of positive poles 
        associated with \( \mathcal{A}_{\ell} \),
    \( q_{k,\alpha} \) is given by \eqref{eq:PPatalpha},
    \( q_{k,0} \) is given by \eqref{eq:RPFPoleatZero},
    and the \( C_{\ell} \) and \( c_{n} \) are all constants.
\end{theorem}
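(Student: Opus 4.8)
The plan is to start from the partial-fraction decomposition of $q$ as a rational function and then to use the defining relations \eqref{eq:first_relation} and \eqref{eq:second_relation} to pin down, pole by pole, the shape of each principal part. Write $q(z) = P(z) + \sum_{\beta \in P(q)} PP_{\beta}[q](z)$, where $P(z)$ is the polynomial part. Since the poles of $q$ are real and the nonzero poles are hyperbolic fixed points of $G_{p}$, I would first separate the behavior at $0$ and at $\infty$: the relation $q \mid T = -q$ (with $Tz = -1/z$) ties the pole at $0$ to the behavior at $\infty$, and a direct computation shows that the polynomial part $P(z)$ is forced to be constant and the pole at $0$ to have order at most $2k$. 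Both are then recorded exactly by $c_{0}q_{k,0}(z)$ together with $\sum_{n=1}^{2k-1} c_{n}z^{-n}$, which reduces everything to the principal parts at the nonzero poles.

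Next I would organize the nonzero poles by irreducible systems. Because \eqref{eq:first_relation} and \eqref{eq:second_relation} force poles to occur together exactly in ISPs, the set of nonzero poles is a disjoint union of the sets $P_{\mathcal{A}_{\ell}} = Z_{\mathcal{A}_{\ell}} \cup Z_{-\mathcal{A}_{\ell}}^{\prime}$, one for each hyperbolic equivalence class $\mathcal{A}_{\ell}$ that occurs. The heart of the argument is then to show that on a single ISP the principal parts are governed by one constant $C_{\ell}$. For this I would use the building block $\Phi_{\alpha}(z) = D^{k/2}/Q_{\alpha}(z,1)^{k}$. Because $M_{\alpha}$ stabilizes $\alpha$, one checks that $Q_{\alpha} \circ M_{\alpha} = Q_{\alpha}$, and hence $\Phi_{\alpha} \mid_{2k} M_{\alpha} = \Phi_{\alpha}$; more generally $\Phi_{\alpha} \mid_{2k} M = \Phi_{M^{-1}\alpha}$, so these functions are permuted within the class and in particular $\Phi_{\alpha} \mid T = \Phi_{T\alpha}$. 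Taking principal parts, $q_{k,\alpha} = PP_{\alpha}[\Phi_{\alpha}]$ inherits invariance under the weight-$2k$ slash by the hyperbolic generator $M_{\alpha}$.

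The key local step is to argue that the principal part of an arbitrary RPF $q$ at a pole $\alpha$ is itself invariant under $\mid_{2k} M_{\alpha}$. This comes from promoting \eqref{eq:first_relation}--\eqref{eq:second_relation} to the associated cocycle on all of $G_{p}$: since $M_{\alpha} \in G_{p}$ fixes $\alpha$, the cocycle element attached to $M_{\alpha}$ is regular away from the fixed points $\alpha$ and $\alpha^{\prime}$, and this forces $PP_{\alpha}[q]$ to be fixed by $\mid_{2k} M_{\alpha}$. A local computation at the fixed point $\alpha$ of the hyperbolic map $M_{\alpha}$ then shows that the space of such invariant principal parts is one-dimensional, spanned by $q_{k,\alpha}$; in particular the pole has order exactly $k$ and its lower-order coefficients are determined by the leading one. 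This gives $PP_{\alpha}[q] = C\, q_{k,\alpha}$ for a single scalar $C$. Propagating this across the orbit by means of $\Phi_{\alpha} \mid M = \Phi_{M^{-1}\alpha}$ shows that the same constant $C_{\ell}$ serves every $\alpha \in Z_{\mathcal{A}_{\ell}}$, and applying $q \mid T = -q$ together with $Z_{-\mathcal{A}_{\ell}}^{\prime} = T Z_{\mathcal{A}_{\ell}}$ produces the matching contributions $-C_{\ell}\,q_{k,\alpha^{\prime}}$ on the remaining poles, which explains the minus sign and the Hecke-conjugation in \eqref{eq:RPFform}.

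Finally I would subtract. Setting $r = q - \sum_{\ell} C_{\ell}\left(\sum_{\alpha \in Z_{\mathcal{A}_{\ell}}} q_{k,\alpha} - \sum_{\alpha \in Z_{-\mathcal{A}_{\ell}}} q_{k,\alpha^{\prime}}\right) - c_{0}q_{k,0} - \sum_{n=1}^{2k-1} c_{n}z^{-n}$, by construction the principal part of $r$ at every pole, as well as its polynomial part, has been cancelled, so $r \equiv 0$ by uniqueness of the partial-fraction decomposition; hence $q$ equals the displayed expression. The main obstacle is the local step of the third paragraph: justifying that the RPF relations force $PP_{\alpha}[q]$ to be $\mid_{2k} M_{\alpha}$-invariant, which requires carefully extending the two generator relations to a genuine cocycle whose $M_{\alpha}$-component is regular off the fixed points, and then proving the one-dimensionality of the invariant principal parts, since a priori nothing bounds the order of the pole at $\alpha$ without invoking this invariance.
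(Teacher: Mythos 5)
Your proposed route---partial fractions; isolate the pole at $0$ and the behavior at $\infty$; organize the nonzero poles into the ISPs $Z_{\mathcal{A}_{\ell}} \cup Z_{-\mathcal{A}_{\ell}}^{\prime}$; show that the principal part at each $\alpha$ is forced, by invariance under the stabilizing hyperbolic element $M_{\alpha}$ together with a one-dimensionality computation, to be a scalar multiple of $q_{k,\alpha}$; and finally use \eqref{eq:first_relation} to produce the matching $-C_{\ell}\,q_{k,\alpha^{\prime}}$ terms---is the right skeleton. Note, however, that this paper contains no proof of Theorem \ref{thm:AnyRPFdiscription} to compare against: the result is imported from \cite{MR1876701}, and the argument there runs along essentially the lines you describe, with exactly the step you flag (stabilizer-invariance of $PP_{\alpha}[q]$ plus one-dimensionality, which also yields pole order exactly $k$) as the crux.

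Two points in your sketch are genuinely wrong or overstated as written. First, the claim that relation \eqref{eq:first_relation} alone forces the polynomial part to be constant and the pole at $0$ to have order at most $2k$ is false: for any rational $f$, the function $q = f - f\mid T$ satisfies $q + q\mid T = 0$ because $T^{2}=I$ and the weight is even, and taking $f(z)=z^{2}$ gives $q(z) = z^{2} - z^{-2k-2}$, which has a nonconstant polynomial part and a pole of order $2k+2$ at $0$. Killing such terms requires the second relation \eqref{eq:second_relation} (or an appeal to the known classification behind \eqref{eq:RPFPoleatZero}), so your ``direct computation'' must use both relations. Second, your mechanism for the key local step is shaky: for $M_{\alpha}$ written as a word in $S$ and $T$, the associated cocycle element is a sum of slashed copies of $q$ whose poles sweep through many images of $P(q)$; it is not in general ``regular away from $\alpha$ and $\alpha^{\prime}$.'' The correct derivation steps around the cycle $Z_{\mathcal{A}_{\ell}}$ locally, using \eqref{eq:first_relation} and \eqref{eq:second_relation} near each pole (the ISP structure controls which neighboring poles can interfere); composing these steps returns to $\alpha$ via $M_{\alpha}$ and yields the invariance $PP_{\alpha}\left[\, PP_{\alpha}[q] \mid M_{\alpha} \right] = PP_{\alpha}[q]$ --- note the invariance must be phrased modulo functions regular at $\alpha$, since slashing a principal part is not again a principal part --- and the same circuit is what proves that a single constant $C_{\ell}$ serves the whole cycle. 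The identity $\Phi_{\alpha} \mid M = \Phi_{M^{-1}\alpha}$ of Lemma \ref{lemma:quadraticslash} relates the model functions only; by itself it says nothing about how the principal parts of an arbitrary RPF at different poles are related, so ``propagating across the orbit'' needs this stepping argument, not just that identity.
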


\section{RPFs with Hecke-symmetric pole sets}
\label{sec:RPFs_symmetric}

An RPF must have a particularly simple form
if every irreducible system of poles
has Hecke-symmetry,
and 
an RPF may have a simple form
if its complete set of poles
has Hecke-symmetry.

\subsection{A correction}

In Theorem 2 of \cite{MR1876701}
we consider the case of
pole sets
that have Hecke-symmetry,
with \( k \) odd.
The converse in Theorem 2
of is incorrect as stated.
The mis-statement 
involves the distinction between
\begin{enumerate}
\renewcommand{\labelenumi}{(\Alph{enumi})}
\item 
an RPF
for which every nonzero irreducible system of poles 
has Hecke-symmetry, and
\item 
an RPF
for which the complete set of nonzero poles 
has Hecke-symmetry.
\end{enumerate}
An RPF that satisfies (A)
must satisfy (B),
but the converse does not hold.
We state a corrected theorem 
that accounts for this distinction.

\begin{theorem}[corrected]
\label{thm.SymmetricISPcorrected}
    Suppose that \( k \) is an odd positive integer.
    
    Suppose \( q \) is an RPF of weight \( 2k \) on \( G_{p} \)
    with Hecke-symmetric
    irreducible systems of poles.
    Then
    \( q \) is of the form
    \begin{equation}
        q(z) = \sum_{\ell=1}^{L} d_{\ell}
               \sum_{\alpha \in Z_{\mathcal{A}_{\ell}}} 
	       Q_{\alpha}(z,1)^{-k}
	     + c_{0}q_{k,0}(z),
        \label{eq:RPFsymmetrickodd}
    \end{equation}
    where each
    \( \mathcal{A}_{\ell} \) 
    is a \( G_{p} \)-equivalence class of 
    \( \lambda \)-BQFs
    satisfying \( -\mathcal{A}_{\ell} = \mathcal{A}_{\ell} \),
    the \( d_{\ell} \) \( (1 \leq \ell \leq M) \) are constants,
    and \( q_{k,0}(z) \) is given by 
    \eqref{eq:RPFPoleatZero}.
   
    Conversely,
    any rational function of the form
    \eqref{eq:RPFsymmetrickodd}
    is an RPF of weight \( 2k \) 
    on \( G_{p} \) with 
    a set of nonzero poles that
    (taken in its entirety) 
    is
    Hecke-symmetric.
\end{theorem}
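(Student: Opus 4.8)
\emph{Strategy.} The plan is to prove the converse first, since the forward direction will invoke it, and to base everything on the general expansion in Theorem~\ref{thm:AnyRPFdiscription} together with one recombination identity that holds precisely because \( k \) is odd. For a hyperbolic fixed point \( \alpha \) with simple form \( Q_{\alpha} \) of discriminant \( D \), write \( R_{\alpha}(z) = (\alpha-\alpha')^{k}/\bigl((z-\alpha)^{k}(z-\alpha')^{k}\bigr) = D^{k/2}Q_{\alpha}(z,1)^{-k} \); this rational function has its only poles at \( \alpha \) and \( \alpha' \) and vanishes at \( \infty \), so it equals the sum of its two principal parts. By \eqref{eq:PPatalpha}, \( q_{k,\alpha} = PP_{\alpha}[R_{\alpha}] \), while applying the same definition at the conjugate point \( \alpha' \) (whose own conjugate is \( \alpha \)) gives \( q_{k,\alpha'} = PP_{\alpha'}\bigl[(\alpha'-\alpha)^{k}/((z-\alpha')^{k}(z-\alpha)^{k})\bigr] \); since \( k \) is odd, \( (\alpha'-\alpha)^{k} = -(\alpha-\alpha')^{k} \), so \( q_{k,\alpha'} = -PP_{\alpha'}[R_{\alpha}] \). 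Hence
\[
q_{k,\alpha}(z) - q_{k,\alpha'}(z) = PP_{\alpha}[R_{\alpha}] + PP_{\alpha'}[R_{\alpha}] = R_{\alpha}(z) = D^{k/2}Q_{\alpha}(z,1)^{-k}.
\]
It is exactly the odd parity of \( k \) that turns the subtraction in \eqref{eq:RPFform} into a sum of principal parts reconstituting the full rational function \( R_{\alpha} \).

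\emph{Converse.} Suppose \( q \) has the form \eqref{eq:RPFsymmetrickodd} with each \( -\mathcal{A}_{\ell} = \mathcal{A}_{\ell} \), so that \( Z_{-\mathcal{A}_{\ell}} = Z_{\mathcal{A}_{\ell}} \). Running the identity in reverse, \( d_{\ell}\sum_{\alpha\in Z_{\mathcal{A}_{\ell}}}Q_{\alpha}(z,1)^{-k} = d_{\ell}D^{-k/2}\bigl(\sum_{\alpha\in Z_{\mathcal{A}_{\ell}}}q_{k,\alpha} - \sum_{\alpha\in Z_{-\mathcal{A}_{\ell}}}q_{k,\alpha'}\bigr) \), which is a constant multiple of the canonical summand attached to the ISP \( P_{\mathcal{A}_{\ell}} \) in \eqref{eq:RPFform} and is therefore an RPF; since \( q_{k,0} \) is an RPF and the relations \eqref{eq:first_relation}, \eqref{eq:second_relation} are linear, \( q \) is an RPF. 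Each \( Q_{\alpha}(z,1)^{-k} \) is symmetric under \( \alpha\leftrightarrow\alpha' \), so its poles \( \alpha,\alpha' \) carry identical principal-part structure; no cancellation can remove one without the other, and the full nonzero pole set \( \bigcup_{\ell}\{\alpha,\alpha':\alpha\in Z_{\mathcal{A}_{\ell}}\} \) is visibly closed under Hecke-conjugation, which is condition~(B).

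\emph{Forward direction.} I would start from \eqref{eq:RPFform}, discarding zero-coefficient terms so that the \( \mathcal{A}_{\ell} \) are exactly the classes whose ISPs \( P_{\mathcal{A}_{\ell}} = Z_{\mathcal{A}_{\ell}}\cup Z_{-\mathcal{A}_{\ell}}' \) occur in \( q \). The crucial step is the per-ISP equivalence \( P_{\mathcal{A}_{\ell}} = P_{\mathcal{A}_{\ell}}' \iff -\mathcal{A}_{\ell} = \mathcal{A}_{\ell} \). Here one uses that \( Z_{\mathcal{A}_{\ell}} \) consists of positive simple numbers while \( Z_{-\mathcal{A}_{\ell}}' \) consists of negative ones; comparing \( P_{\mathcal{A}_{\ell}} \) by sign with \( P_{\mathcal{A}_{\ell}}' = Z_{\mathcal{A}_{\ell}}'\cup Z_{-\mathcal{A}_{\ell}} \) forces \( Z_{\mathcal{A}_{\ell}} = Z_{-\mathcal{A}_{\ell}} \), and since a simple number determines its form and hence its class, distinct classes have disjoint cycles, so \( Z_{\mathcal{A}_{\ell}} = Z_{-\mathcal{A}_{\ell}} \) upgrades to \( \mathcal{A}_{\ell} = -\mathcal{A}_{\ell} \). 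This is precisely where hypothesis~(A) is indispensable and cannot be weakened to~(B): under~(B) alone one could have \( -\mathcal{A}_{\ell} = \mathcal{A}_{\ell'} \) for distinct \( \ell,\ell' \), leaving the union symmetric while no single class is self-paired. With \( -\mathcal{A}_{\ell} = \mathcal{A}_{\ell} \) in hand, the identity collapses each bracket in \eqref{eq:RPFform} to \( C_{\ell}D^{k/2}\sum_{\alpha\in Z_{\mathcal{A}_{\ell}}}Q_{\alpha}(z,1)^{-k} \), and setting \( d_{\ell} = C_{\ell}D^{k/2} \) gives the main sum of \eqref{eq:RPFsymmetrickodd}.

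\emph{Pole at zero, and the main obstacle.} To dispose of the residual terms \( \sum_{n=1}^{2k-1}c_{n}z^{-n} \), I would subtract the now-established RPF \( \sum_{\ell}d_{\ell}\sum_{\alpha}Q_{\alpha}(z,1)^{-k} \) from \( q \); the difference is an RPF with a pole only at zero, hence of the restricted shape \eqref{eq:RPFPoleatZero}, and matching Laurent coefficients at zero forces \( c_{n} = 0 \) for \( 2k\neq2 \) (in weight \( 2 \) the lone surviving \( z^{-1} \) term is absorbed into the free \( z^{-1} \) coefficient of \( q_{k,0} \)). This leaves exactly \eqref{eq:RPFsymmetrickodd}. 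I expect the main obstacle to be the per-ISP equivalence \( P_{\mathcal{A}_{\ell}}\text{ symmetric}\iff -\mathcal{A}_{\ell}=\mathcal{A}_{\ell} \): it is where the sign bookkeeping for simple numbers and their conjugates must be combined with the disjointness of distinct cycles, and it is exactly the point at which the correction replacing~(B) by~(A) does its work.
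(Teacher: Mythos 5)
Your parity identity \( q_{k,\alpha} - q_{k,\alpha^{\prime}} = D^{k/2}Q_{\alpha}(z,1)^{-k} \) for odd \( k \) is correct (it is exactly the computation the paper performs inside the proof of its any-weight theorem in Section 3.3, where \( q_{k,\alpha} + (-1)^{k}q_{k,\alpha^{\prime}} = D^{k/2}Q_{\alpha}(z,1)^{-k} \) is derived), and your forward-direction argument for the per-ISP equivalence \( P_{\mathcal{A}} = P_{\mathcal{A}}^{\prime} \iff \mathcal{A} = -\mathcal{A} \) --- comparing signs in \( P_{\mathcal{A}} = Z_{\mathcal{A}} \cup Z_{-\mathcal{A}}^{\prime} \), then using injectivity of \( \alpha \mapsto Q_{\alpha} \) and nonemptiness of \( Z_{\mathcal{A}} \) to upgrade \( Z_{\mathcal{A}} = Z_{-\mathcal{A}} \) to \( \mathcal{A} = -\mathcal{A} \) --- is sound and correctly isolates where hypothesis (A) is indispensable. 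Be aware that the paper itself gives no proof here: its ``proof'' is a one-line deferral to \cite{MR1876701}, so you are necessarily reconstructing.

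There is, however, a genuine gap at the one load-bearing step, and it infects both of your directions. In the converse you write that \( d_{\ell}D^{-k/2}\bigl(\sum_{\alpha \in Z_{\mathcal{A}_{\ell}}} q_{k,\alpha} - \sum_{\alpha \in Z_{-\mathcal{A}_{\ell}}} q_{k,\alpha^{\prime}}\bigr) \) ``is a constant multiple of the canonical summand attached to the ISP \( P_{\mathcal{A}_{\ell}} \) in \eqref{eq:RPFform} and is therefore an RPF.'' Theorem \ref{thm:AnyRPFdiscription} is a one-way statement: every RPF has the form \eqref{eq:RPFform}; it does not assert that every function of that form is an RPF, and it cannot, since choosing all \( C_{\ell} = c_{0} = 0 \) and a single \( c_{3} = 1 \) in \eqref{eq:RPFform} yields \( z^{-3} \), which has that form but is not an RPF of weight \( 2k \geq 4 \) by \eqref{eq:RPFPoleatZero}. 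So ``canonical summand'' buys you nothing: verifying the relations \eqref{eq:first_relation} and \eqref{eq:second_relation} for \( \sum_{\alpha \in Z_{\mathcal{A}}} Q_{\alpha}(z,1)^{-k} \) is the entire content of the converse, not a consequence of its shape. Even under the generous reading implicit in the paper's Section 3.3 (that any prescribed hyperbolic part is realized by \emph{some} RPF, possibly with a forced tail \( \sum_{n=1}^{2k-1} c_{n}z^{-n} \)), you would still owe the argument that the tail vanishes. Your forward direction inherits the same problem at the subtraction step: to conclude that \( q \) minus the main sum is an RPF constrained by \eqref{eq:RPFPoleatZero}, you must already know the main sum is an RPF. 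The repair is precisely the machinery the paper deploys for its any-weight theorem: either check the relations directly using Lemma \ref{lemma:quadraticslash} together with the combinatorics of how \( T \) and \( U \) permute simple forms (this is what \cite{MR1876701}, to which the paper defers, supplies), or run the principal-parts localization of Section 3.3 --- separating hyperbolic poles from the parabolic points \( U^{s}(0) \) and \( 0 \) via Lemmas \ref{lemma:quadraticslash} and \ref{lemma:q2slash} --- to show the residual \( z^{-n} \) tail satisfies both relations and hence must vanish. Without one of these, ``therefore an RPF'' is an unsupported assertion at exactly the point where the theorem has content.
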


\begin{proof}
   The proof in \cite{MR1876701} is correct
   for this statement of the theorem.
\end{proof}

The corrected converse statement 
leaves open the possibility that
there exist
rational period functions
with Hecke-symmetric pole sets 
that do not have 
Hecke-symmetric
irreducible systems of poles.
Indeed, 
such rational period functions exist,
as the next example shows.

\begin{example}
\label{ex:RPF_nonsym_ISP}
Put \( \lambda = \lambda_{4} = \sqrt{2} \).
The smallest \( \lambda_{4} \)-BQF discriminant
with class number greater than \( 1 \)
is \( D = 14 \),
which has class number \( h_{4,14} = 2 \)
\cite{MR3078226}.
The two equivalence classes satisfy
\( \mathcal{A} \neq -\mathcal{A} \),
so we have
\( \mathcal{A}_{1} = - \mathcal{A}_{2} \).
We denote by \( \mathcal{A}_{1} \)
the equivalence class 
containing the simple BQFs
\begin{equation*}
	Q_{1,1} = [1,-\sqrt{2},-3] 
	\mbox{ and }
	Q_{1,2} = [1,\sqrt{2},-3],
\end{equation*}
and by \( \mathcal{A}_{2} \)
the equivalence class 
containing the simple BQFs
\begin{equation*}
	Q_{2,1} = [3,-\sqrt{2},-1] 
	\mbox{ and }
	Q_{2,2} = [3,\sqrt{2},-1].
\end{equation*}
The corresponding positive poles 
are
\begin{equation*}
\mathcal{Z}_{\mathcal{A}_{1}}
= \left\{ \alpha_{1}, \alpha_{2} \right\}
= \left\{ \frac{\sqrt{2}+\sqrt{14}}{2}, \frac{-\sqrt{2}+\sqrt{14}}{2} \right\},
\end{equation*}
and
\begin{equation*}
\mathcal{Z}_{\mathcal{A}_{2}}
= \left\{ \beta_{1}, \beta_{2} \right\}
= \left\{ \frac{\sqrt{2}+\sqrt{14}}{6}, \frac{-\sqrt{2}+\sqrt{14}}{6} \right\}.
\end{equation*}
The corresponding negative poles are
\begin{equation*}
T\mathcal{Z}_{\mathcal{A}_{1}}
= \left\{ \frac{\sqrt{2}-\sqrt{14}}{6}, \frac{-\sqrt{2}-\sqrt{14}}{6} \right\}
= \left\{ \beta_{1}^{\prime}, \beta_{2}^{\prime} \right\},
\end{equation*}
and
\begin{equation*}
T\mathcal{Z}_{\mathcal{A}_{2}}
= \left\{ \frac{\sqrt{2}-\sqrt{14}}{2}, \frac{-\sqrt{2}-\sqrt{14}}{2} \right\}
= \left\{ \alpha_{1}^{\prime}, \alpha_{2}^{\prime} \right\}.
\end{equation*}
If we put \( L=2 \), 
\( d_{1} = d_{2} = 1 \),
and \( c_{0} = 0 \)
in \eqref{eq:RPFsymmetrickodd}
we have the rational period function
\begin{align*}
q(z)
& = Q_{\alpha_{1}}(z,1)^{-k} + Q_{\alpha_{2}}(z,1)^{-k} 
	+ Q_{\beta_{1}}(z,1)^{-k} + Q_{\beta_{2}}(z,1)^{-k} \\
& = \frac{1}{(z^{2}-\sqrt{2}z-3)^{k}} + \frac{1}{(z^{2}+\sqrt{2}z-3)^{k}}
	+ \frac{1}{(3z^{2}-\sqrt{2}z-1)^{k}} + \frac{1}{(3z^{2}+\sqrt{2}z-1)^{k}}  \\
& = \frac{1}{(z-\alpha_{1})^{k}(z-\alpha_{1}^{\prime})^{k}}
	+ \frac{1}{(z-\alpha_{2})^{k}(z-\alpha_{2}^{\prime})^{k}}
	+ \frac{1}{3^{k}(z-\beta_{1})^{k}(z-\beta_{1}^{\prime})^{k}}
	+ \frac{1}{3^{k}(z-\beta_{2})^{k}(z-\beta_{2}^{\prime})^{k}}.
\end{align*}
The set of poles  
\begin{equation*}
P(q) = \left\{ \alpha_{1}, \alpha_{1}^{\prime},\alpha_{2}, \alpha_{2}^{\prime}, 
	\beta_{1}, \beta_{1}^{\prime}, \beta_{2}, \beta_{2}^{\prime} \right\},
\end{equation*}
is Hecke-symmetric,
while both irreducible systems of poles
\begin{equation*}
P_{\mathcal{A}_{1}}
	= \mathcal{Z}_{\mathcal{A}_{1}} \cup T\mathcal{Z}_{\mathcal{A}_{1}}
	= \left\{ \alpha_{1}, \alpha_{2}, 
		\beta_{1}^{\prime}, \beta_{2}^{\prime} \right\},
\end{equation*}
and 
\begin{equation*}
P_{\mathcal{A}_{2}}
	= \mathcal{Z}_{\mathcal{A}_{2}} \cup T\mathcal{Z}_{\mathcal{A}_{2}}
	= \left\{ \beta_{1}, \beta_{2}, \alpha_{1}^{\prime}, 
		\alpha_{2}^{\prime} \right\},
\end{equation*}
are not Hecke-symmetric.
\end{example}

\subsection{Pairs of non-symmetric ISPs}
\label{subsec:nonsymmetric_pairs_ISPs}

An important feature of the 
rational period function in Example 1 is
that the union of the two non-symmetric 
irreducible systems of poles
is Hecke-symmetric.
The next lemma shows that this always happens
for RPFs with Hecke-symmetric pole sets,
that is,
the non-symmetric ISPs always occur in pairs.

\begin{lemma}
Let \( q \) be an RPF 
of weight \( 2k \) on \( G_{p} \)
for \( k \in \Z^{+} \) and \( p \geq 3 \).
Suppose that \( q \)
has a Hecke-symmetric
set of nonzero poles.
If \( q \)
has any ISPs that are not Hecke-symmetric,
then such ISPs must occur in pairs.
That is,
if 
\( P_{\mathcal{A}} \)
is a non-Hecke-symmetric ISP for \( q \)
then 
\( P_{-\mathcal{A}} \)
is also a non-Hecke-symmetric ISP for \( q \)
that is
distinct from \( P_{\mathcal{A}} \),
and \( P_{\mathcal{A}} \cup P_{-\mathcal{A}} \)
is Hecke-symmetric.
\end{lemma}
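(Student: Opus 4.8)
The whole argument turns on the single identity $P_{\mathcal{A}}^{\prime} = P_{-\mathcal{A}}$, and once this is in hand all three assertions fall out almost formally. The plan is to establish this identity first. Starting from the decomposition recorded in the Background, $P_{\mathcal{A}} = \mathcal{Z}_{\mathcal{A}} \cup \mathcal{Z}_{-\mathcal{A}}^{\prime}$, I would apply Hecke-conjugation term by term, using that conjugation is an involution, $(\alpha^{\prime})^{\prime} = \alpha$, together with $-(-\mathcal{A}) = \mathcal{A}$. This gives $P_{\mathcal{A}}^{\prime} = \mathcal{Z}_{\mathcal{A}}^{\prime} \cup \mathcal{Z}_{-\mathcal{A}}$, and rewriting the right-hand side as $\mathcal{Z}_{-\mathcal{A}} \cup \mathcal{Z}_{\mathcal{A}}^{\prime}$ matches exactly the decomposition of $P_{-\mathcal{A}}$. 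Note that since $\mathcal{A}$ is hyperbolic so is $-\mathcal{A}$, so $P_{-\mathcal{A}}$ is a genuine ISP.

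Next I would show that $P_{-\mathcal{A}}$ is actually an ISP \emph{of} $q$. Because $P_{\mathcal{A}}$ is an ISP of $q$ we have $P_{\mathcal{A}} \subseteq P(q)$; applying conjugation and invoking the hypothesis that $P(q)$ is Hecke-symmetric gives $P_{-\mathcal{A}} = P_{\mathcal{A}}^{\prime} \subseteq P(q)^{\prime} = P(q)$. The poles of any RPF decompose into complete irreducible systems, so it remains only to identify $P_{-\mathcal{A}}$ with one of these systems. For this I would examine the positive poles: $\mathcal{Z}_{-\mathcal{A}}$ is nonempty, since every hyperbolic class contains a simple form, and any $x \in \mathcal{Z}_{-\mathcal{A}}$ lies in $P(q)$ and hence in the positive-pole set $\mathcal{Z}_{\mathcal{B}}$ of some ISP $P_{\mathcal{B}}$ of $q$. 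But $x$ determines its equivalence class through the form $Q_{x}$, which forces $\mathcal{B} = -\mathcal{A}$. Thus $P_{-\mathcal{A}}$ is one of the irreducible systems of $q$.

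The remaining points are immediate consequences of $P_{\mathcal{A}}^{\prime} = P_{-\mathcal{A}}$. If $P_{\mathcal{A}}$ were equal to $P_{-\mathcal{A}}$ then $P_{\mathcal{A}} = P_{\mathcal{A}}^{\prime}$ would be Hecke-symmetric, contrary to hypothesis; hence $P_{-\mathcal{A}} \neq P_{\mathcal{A}}$, and the same computation shows $P_{-\mathcal{A}}$ is itself non-symmetric, since its conjugate is $P_{\mathcal{A}} \neq P_{-\mathcal{A}}$. Finally $P_{\mathcal{A}} \cup P_{-\mathcal{A}} = P_{\mathcal{A}} \cup P_{\mathcal{A}}^{\prime}$, which has Hecke-symmetry by the general remark that $R \cup R^{\prime}$ is always Hecke-symmetric. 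The only step demanding genuine care, the main obstacle, is the middle one: upgrading the set inclusion $P_{-\mathcal{A}} \subseteq P(q)$ to the assertion that $P_{-\mathcal{A}}$ is a \emph{full} irreducible system of $q$. This relies on the atomic, all-or-nothing nature of an ISP as the minimal set of poles forced to occur together by the relations \eqref{eq:first_relation} and \eqref{eq:second_relation}, so that an ISP contained in $P(q)$ cannot be split across distinct systems.
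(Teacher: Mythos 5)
Your proof is correct, and its core coincides with the paper's: the published proof consists essentially of the single computation
\( P_{\mathcal{A}}^{\prime}
= \left( \mathcal{Z}_{\mathcal{A}} \cup \mathcal{Z}_{-\mathcal{A}}^{\prime} \right)^{\prime}
= \mathcal{Z}_{-\mathcal{A}} \cup \mathcal{Z}_{\mathcal{A}}^{\prime}
= P_{-\mathcal{A}} \),
from which non-symmetry of \( P_{-\mathcal{A}} \), its distinctness from \( P_{\mathcal{A}} \), and the Hecke-symmetry of \( P_{\mathcal{A}} \cup P_{-\mathcal{A}} = P_{\mathcal{A}} \cup P_{\mathcal{A}}^{\prime} \) all follow exactly as in your closing paragraph. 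Where you go beyond the paper is your middle step: the published proof never explicitly verifies that \( P_{-\mathcal{A}} \) actually occurs among the ISPs \emph{of} \( q \) --- indeed, it never visibly invokes the hypothesis that \( P(q) \) is Hecke-symmetric --- whereas you derive \( P_{-\mathcal{A}} = P_{\mathcal{A}}^{\prime} \subseteq P(q)^{\prime} = P(q) \) and then upgrade this inclusion to membership among \( q \)'s irreducible systems by picking a simple pole \( x \in \mathcal{Z}_{-\mathcal{A}} \) (nonempty since every hyperbolic class contains a simple form) and using the uniqueness of the equivalence class of \( Q_{x} \) to force \( \mathcal{B} = -\mathcal{A} \). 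That is precisely where the symmetry hypothesis does its work, and you are right that it is the one step demanding care: the paper evidently treats it as immediate from the fact that the pole set of an RPF decomposes into complete ISPs, so making it explicit strengthens rather than departs from the paper's argument.
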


\begin{proof}
If \( P_{\mathcal{A}} \)
is a non-Hecke-symmetric ISP 
then \( \mathcal{A} \neq -\mathcal{A} \)
and \( P_{\mathcal{A}} \neq P_{-\mathcal{A}} \).
Moreover,
\begin{align*}
P_{\mathcal{A}}^{\prime}
& = \left( \mathcal{Z}_{\mathcal{A}} \cup \mathcal{Z}_{-\mathcal{A}}^{\prime} \right)^{\prime} \\
& = \mathcal{Z}_{-\mathcal{A}} \cup \mathcal{Z}_{\mathcal{A}}^{\prime}  \\
& = P_{-\mathcal{A}},
\end{align*}
so 
\( P_{-\mathcal{A}} \neq P_{-\mathcal{A}}^{\prime} \),
and 
\( P_{-\mathcal{A}} \)
is also non-Hecke-symmetric.
On the other hand,
\begin{align*}
P_{\mathcal{A}} \cup P_{-\mathcal{A}}
& = \left( \mathcal{Z}_{\mathcal{A}} \cup \mathcal{Z}_{-\mathcal{A}}^{\prime} \right)
	\cup
	\left(\mathcal{Z}_{-\mathcal{A}} \cup \mathcal{Z}_{\mathcal{A}}^{\prime}  \right),
\end{align*}
which is Hecke-symmetric.
\end{proof}

\subsection{RPFs of any weight}
\label{subsec:RPFs_all_k}

The class of functions given by 
\eqref{eq:RPFsymmetrickodd}
contains rational period functions 
for odd values of \( k \)
(half of the weight).
In the next theorem 
we use sums of powers
of \( \lambda \)-BQFs
to write 
a class of functions that contains RPFs for any 
even integer weight.
We first state and prove
two lemmas
that will help 
us to specify the poles in the two RPF relations.

\begin{lemma}
\label{lemma:quadraticslash}
Let
\( \alpha \) be a hyperbolic fixed point of \( G_{p} \)
for \( p \geq 3 \),
and 
put \( Q_{\alpha}(z) = Q_{\alpha}(z,1) \).
Then 
for every \( M \in G_{p} \) we have
\begin{equation*}
\left( Q_{\alpha}^{-k} \mid M \right)(z)
	= Q_{M^{-1}\alpha}^{-k}(z).
\end{equation*}
\end{lemma}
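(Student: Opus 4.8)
The plan is to unwind the definitions of the slash operator and of the quadratic form $Q_\alpha$, and to track how the roots of the associated form transform under the $G_p$-action. The key identity to establish is that the quadratic form $Q_\alpha(z,1)$ transforms under the weight-$2$ slash operator exactly as the composition $Q_\alpha \circ M$ does, up to the factor coming from the fixed points, and then to match the result with the form $Q_{M^{-1}\alpha}$.

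First I would recall from Section~\ref{subsec:BQFss} that the hyperbolic $\lambda$-BQF $Q_\alpha = [A,B,C]$ has $\alpha$ and $\alpha'$ as the roots of $Q_\alpha(z,1) = Az^2 + Bz + C$, so that $Q_\alpha(z,1) = A(z-\alpha)(z-\alpha')$. Writing $M = \bigl(\begin{smallmatrix} a & b \\ c & d \end{smallmatrix}\bigr)$, the definition of the slash operator gives
\begin{equation*}
\left( Q_\alpha^{-k} \mid M \right)(z) = (cz+d)^{-2k} \, Q_\alpha(Mz,1)^{-k}.
\end{equation*}
The heart of the computation is then to simplify $(cz+d)^{2}\,Q_\alpha(Mz,1)$. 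Using $Mz = \frac{az+b}{cz+d}$ and the factored form, one finds
\begin{equation*}
(cz+d)^{2}\, Q_\alpha\!\left(\tfrac{az+b}{cz+d},1\right)
= A\,(az+b-\alpha(cz+d))\,(az+b-\alpha'(cz+d)),
\end{equation*}
which is a quadratic polynomial in $z$ whose roots are precisely $M^{-1}\alpha$ and $M^{-1}\alpha'$. The main obstacle — and the step requiring the most care — is to verify that the leading coefficient and the normalization match so that this polynomial equals $Q_{M^{-1}\alpha}(z,1)$ on the nose, not merely up to a scalar.

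To settle the normalization cleanly, I would avoid brute-force coefficient comparison and instead invoke the structural facts already available in the excerpt. Since $M \in G_p$ and the action of $G_p$ preserves the discriminant and partitions forms into equivalence classes, the form obtained by the composition $Q_\alpha \circ M$ lies in the same class as $Q_\alpha$ and has the same discriminant $D$. Its roots are $M^{-1}\alpha$ and $(M^{-1}\alpha)'=M^{-1}\alpha'$ (the Hecke-conjugate commutes with the $G_p$-action because conjugation sends the stabilizer of $\alpha$ to the stabilizer of $M^{-1}\alpha$ under $M^{-1}(\cdot)M$). The composition map on forms and the root-to-form correspondence are mutually inverse bijections that both respect the discriminant, so the normalized hyperbolic form attached to $M^{-1}\alpha$ is exactly $Q_\alpha \circ M$. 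Hence $(cz+d)^{2}\,Q_\alpha(Mz,1) = (Q_\alpha \circ M)(z,1) = Q_{M^{-1}\alpha}(z,1)$.

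Raising both sides to the $-k$ power then yields $\left(Q_\alpha^{-k}\mid M\right)(z) = Q_{M^{-1}\alpha}^{-k}(z)$, completing the argument. I expect the only genuinely delicate point to be confirming the compatibility of Hecke-conjugation with the $G_p$-action on fixed points, so that the two roots of the transformed form are correctly identified as $M^{-1}\alpha$ and $M^{-1}\alpha'$ rather than some other pair; once that identification is in hand, matching the leading coefficients is forced by the discriminant-preserving bijection and requires no separate computation.
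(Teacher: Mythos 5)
Your reduction of the lemma to the identity \( (cz+d)^{2}Q_{\alpha}(Mz,1) = Q_{M^{-1}\alpha}(z,1) \), and your identification of the roots of the left-hand side as \( M^{-1}\alpha \) and \( M^{-1}\alpha^{\prime} \) (including the stabilizer-conjugation argument for \( (M^{-1}\alpha)^{\prime} = M^{-1}\alpha^{\prime} \)), are correct and run parallel to the paper's own computation. The gap sits precisely at the step you declare to ``require no separate computation.'' A hyperbolic form is determined by its root set together with its discriminant only up to sign: \( Q \) and \( -Q \) have the same roots and the same discriminant \( B^{2}-4AC \). So your bijection argument yields only \( Q_{\alpha}\circ M = \pm Q_{M^{-1}\alpha} \); a priori the distinguished root \( \alpha_{Q_{\alpha}\circ M} \) could be \( M^{-1}\alpha^{\prime} \) rather than \( M^{-1}\alpha \), in which case \( Q_{\alpha}\circ M = Q_{M^{-1}\alpha^{\prime}} = -Q_{M^{-1}\alpha} \). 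Equivalence-class membership cannot break the tie, since the paper explicitly allows \( \mathcal{A} = -\mathcal{A} \). Nor does the sign always come out \( + \): on \( \Gamma(1) \), with \( Q_{\alpha}=[1,-1,-1] \) (so \( \alpha = (1+\sqrt{5})/2 \)) one computes \( Q_{\alpha}\circ T = [-1,1,1] = -Q_{\alpha} = Q_{T^{-1}\alpha} \), and here \( \mathcal{A}=-\mathcal{A} \), so no amount of class or discriminant bookkeeping can detect the sign. That sign is the entire point of the lemma in its applications: it is what produces the \( (-1)^{k} \) factors in the theorem of Section \ref{subsec:RPFs_all_k} and makes \eqref{eq:RPFsymmetrickodd} satisfy relation \eqref{eq:first_relation}.

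The repair requires tracking the distinguished root through the two-step correspondence, and that is unavoidably a computation. One route: conjugation preserves trace and primitivity and carries the attracting fixed point \( \alpha \) of \( M_{\alpha} \) to the attracting fixed point \( M^{-1}\alpha \) of \( M^{-1}M_{\alpha}M \), so \( M_{M^{-1}\alpha} = M^{-1}M_{\alpha}M \); one then checks by matrix multiplication that the form attached to \( M^{-1}NM \) is \( Q_{N}\circ M \) (for instance, the lower-left entry of \( M^{-1}NM \), which is the leading coefficient of the attached form, equals \( Q_{N}(a,c) \), the leading coefficient of \( Q_{N}\circ M \)). The paper does the equivalent work on the analytic side: it asserts \( Q_{M^{-1}\alpha}(z) = \tilde{A}(z-M^{-1}\alpha)(z-M^{-1}\alpha^{\prime}) \) with \( \tilde{A} = Aa^{2}+Bac+Cc^{2} \) and carries your first display through to the explicit constant \( A\bigl(a^{2}-(\alpha+\alpha^{\prime})ac+\alpha\alpha^{\prime}c^{2}\bigr) = Aa^{2}+Bac+Cc^{2} = \tilde{A} \), so that the leading coefficients match exactly rather than merely up to sign. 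With either of these additions your argument is complete and essentially coincides with the paper's.
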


\begin{remark}
Schmidt first stated this 
in \cite[p. 236]{MR1219337}
without proof.
We include a proof for completeness.
\end{remark}

\begin{proof}
Write 
\( Q_{\alpha}(z) = Az^{2}+Bz+C = A(z-\alpha)(z-\alpha^{\prime}) \)
and 
\( M = \begin{pmatrix}
a & b \\
c & d
\end{pmatrix} \).
Then
\( Q_{M^{-1}\alpha}(z) = \tilde{A}(z-M^{-1}\alpha)(z-M^{-1}\alpha^{\prime}) \),
where
\( \tilde{A} = Aa^{2} + Bac + Cc^{2} \).
We calculate that
\begin{align*}
\left( Q_{\alpha}^{-k} \mid M \right)(z)
& = (cz+d)^{-2k}A^{-k}
	\left( \frac{az+b}{cz+d} - \alpha \right)^{-k}
	\left( \frac{az+b}{cz+d} - \alpha^{\prime} \right)^{-k} \\
& = A^{-k}
	\left( (-c\alpha + a)z - (d\alpha - b) \right)^{-k}
	\left( (-c\alpha^{\prime} + a)z - (d\alpha^{\prime} - b) \right)^{-k} \\
& = A^{-k}(-c\alpha + a)^{-k}(-c\alpha^{\prime} + a)^{-k}
	\left( z - \frac{d\alpha - b}{-c\alpha + a} \right)^{-k}
	\left( z - \frac{d\alpha^{\prime} - b}{-c\alpha^{\prime} + a} \right)^{-k} \\
& = A^{-k}(a^{2} - (\alpha + \alpha^{\prime})ac + \alpha\alpha^{\prime}c^{2})^{-k}
	\left( z - M^{-1}\alpha \right)^{-k}
	\left( z - M^{-1}\alpha^{\prime} \right)^{-k} \\
& = (Aa^{2} + Bac + Cc^{2})^{-k}
	\left( z - M^{-1}\alpha \right)^{-k}
	\left( z - M^{-1}\alpha^{\prime} \right)^{-k} \\
& = Q_{M^{-1}\alpha}^{-k}(z).
\end{align*}
\end{proof}

The next lemma specifies
the poles that result
when we apply 
the second relation
to part of the RPF
in \eqref{eq:RPFform}.
We consider poles in the extended complex plane,
so we include the point at \( \infty \)
in our analysis.

\begin{lemma}
\label{lemma:q2slash}
Let \( U = U_{\lambda_{p}} \) for \( p \geq 3 \)
and let \( k \in \Z^{+} \).
Suppose that
\( r(z) = \sum_{n=1}^{2k-1} \frac{c_{n}}{z^{n}} \)
is a nontrivial function.
Then for \( 1 \leq t \leq p-2 \),
the function \( \left( r \mid U^{t} \right)(z) \)
has poles at \( z = U^{p-t}(0) \)
and \( z = U^{p-t+1}(0) \),
and no other poles
in 
\( \C \cup \{ \infty \} \).
The function \( \left( r \mid U^{p-1} \right)(z) \)
has a pole at \( z = U^{2}(0) \),
and no other poles
in \( \C \cup \{ \infty \} \).
\end{lemma}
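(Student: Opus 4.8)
The plan is to compute $(r \mid U^{t})(z)$ explicitly as a rational function and read its poles off a factored denominator. First I would record the two invariants of $r$ that control everything. Since $r$ is nontrivial, set $N = \max\{n : c_{n} \neq 0\}$ and $m_{0} = \min\{n : c_{n} \neq 0\}$, so that $1 \le m_{0} \le N \le 2k-1$ and $r(w) = P(w)/w^{N}$ for a polynomial $P$ with $P(0) = c_{N} \neq 0$. Thus on $\C \cup \{\infty\}$ the function $r$ has a single pole (order $N$, at $w=0$) and a zero of order $m_{0}$ at $w=\infty$.

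Next, writing $U^{t} = \bigl(\begin{smallmatrix} a & b \\ c & d\end{smallmatrix}\bigr)$, substituting $w = U^{t}z$, and clearing denominators using $\det U^{t} = ad-bc = 1$, I would obtain the closed form
\[
(r \mid U^{t})(z) = \frac{\tilde{P}(z)}{(az+b)^{N}\,(cz+d)^{2k-m_{0}}},
\qquad \tilde{P}(z) = \sum_{n=m_{0}}^{N} c_{n}(az+b)^{N-n}(cz+d)^{n-m_{0}},
\]
where $\deg\tilde{P} \le N - m_{0}$. The only candidate poles are the denominator zeros $z = -b/a = (U^{t})^{-1}(0)$ and $z = -d/c = (U^{t})^{-1}(\infty)$. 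Using the relations $U(0)=\infty$ and $U^{p}=I$, these are exactly $(U^{t})^{-1}(0) = U^{p-t}(0)$ and $(U^{t})^{-1}(\infty) = U^{-t}(U(0)) = U^{p-t+1}(0)$, matching the points in the statement.

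I would then rule out accidental cancellation. Evaluating $\tilde P$ at $z=-d/c$ kills every term but the $n=m_{0}$ one, giving $\tilde{P}(-d/c) = c_{m_{0}}(-1/c)^{N-m_{0}} \neq 0$, so the pole at $U^{p-t+1}(0)$ has exact order $2k-m_{0}\ge 1$; similarly, when $a\neq 0$, only the $n=N$ term survives at $z=-b/a$, giving $\tilde{P}(-b/a) = c_{N}(1/a)^{N-m_{0}}\neq 0$, so the pole at $U^{p-t}(0)$ has exact order $N\ge 1$. Since the denominator has strictly larger degree than $\tilde P$ (because $2k > N$), the function vanishes at $z=\infty$, so there is no pole there and no further candidates.

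The main obstacle---and the reason $U^{p-1}$ needs a separate statement---is tracking when one candidate pole escapes to $\infty$, which happens precisely when $a=0$. The entries of $U^{t}$ obey $a_{t} = \lambda a_{t-1} - a_{t-2}$ with $a_{0}=1,\ a_{1}=\lambda$, so with $\lambda = 2\cos(\pi/p)$ one finds $a_{t} = \sin((t+1)\pi/p)/\sin(\pi/p)$ and $c_{t} = a_{t-1} = \sin(t\pi/p)/\sin(\pi/p)$. Hence $c\neq 0$ for all $1 \le t \le p-1$, while $a\neq 0$ for $1 \le t \le p-2$ and $a=0$ exactly at $t=p-1$. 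For $1 \le t \le p-2$ both candidate points are finite and distinct, yielding the two claimed poles at $U^{p-t}(0)$ and $U^{p-t+1}(0)$. For $t=p-1$ the factor $(az+b)^{N}$ degenerates to a constant, so $(U^{p-1})^{-1}(0)$ is carried to $\infty$ (where $r\mid U^{p-1}$ has a zero rather than a pole), and only the pole at $(U^{p-1})^{-1}(\infty) = U^{2}(0)$ remains, as asserted.
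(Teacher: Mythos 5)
Your proof is correct and takes essentially the same route as the paper's: both use the explicit sine-quotient entries of \( U^{t} \) (the paper's \( \gamma_{t} \) are exactly your \( a_{t-1} \)) to locate the only candidate poles at \( (U^{t})^{-1}(0) = U^{p-t}(0) \) and \( (U^{t})^{-1}(\infty) = U^{p-t+1}(0) \), invoke the bounds \( 1 \leq m_{0} \leq N \leq 2k-1 < 2k \) to rule out cancellation, and isolate \( t = p-1 \) as the case where the vanishing of the upper-left entry sends the pole at \( (U^{p-1})^{-1}(0) \) to \( \infty \), where it is absorbed. The only difference is bookkeeping: the paper adds pole/zero orders of the automorphy factor \( (\gamma_{t}z-\gamma_{t-1})^{-2k} \) and of \( r(U^{t}z) \) separately, while you clear denominators and evaluate the numerator \( \tilde{P} \) at the two roots, which has the minor added benefit of producing the exact pole orders \( N \) and \( 2k-m_{0} \).
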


\begin{proof}
If we let 
\( \gamma_{t} = \frac{\sin(t\pi/p)}{\sin(\pi/p)} \)
then 
\( U^{t} = \begin{pmatrix}
\gamma_{t+1} & -\gamma_{t} \\
\gamma_{t} & -\gamma_{t-1}
\end{pmatrix} \)
for \( t \in \Z \)
\cite{MR774398}.
For \( 1 \leq t \leq p-1 \)
we have
\begin{equation*}
\left( r \mid U^{t} \right)(z)
	= (\gamma_{t}z - \gamma_{t-1})^{-2k}q(U^{t}z).
\end{equation*}
The automorphy factor
\( (\gamma_{t}z - \gamma_{t-1})^{-2k} \)
has a pole of order \( 2k \)
at
\begin{equation*}
z = \frac{\gamma_{t-1}}{\gamma_{t}}
	= \frac{1}{U^{t}(0)}
	= U^{p-t+1}(0).
\end{equation*}
We have used Lemma 9
in \cite{MR2582982}
for the last equality.
The automorphy factor
has a zero at \( \infty \)
of order \( 2k \).
The function
\( r(U^{t}z) \)
has a pole at \( z = U^{p-t}(0) \)
of order less than \( 2k \),
and a zero at \( z = U^{p-t+1}(0) \)
of order less than \( 2k \),
and no other poles or zeros.
When we multiply expressions,
the zero at \( z=U^{p-t+1}(0) \) 
(from \( r(U^{t}z) \))
combines with the pole at \( z=U^{p-t+1}(0) \) 
(from \( (\gamma_{t}z - \gamma_{t-1})^{-2k} \))
but does not cancel it
because the order of the zero is less than \( 2k \).

If \( t = p-1 \),
we have the additional combination of
the pole at \( z = U(0) = \infty \)
(from \( r(U^{p-1}z) \))
with the zero at \( z = \infty \)
(from \( (\gamma_{p-1}z - \gamma_{p-2})^{-2k} \))
to give no pole at \( z = \infty \).
The poles that remain
are the ones given in the statement of the lemma.
\end{proof}

\begin{theorem}
Let
\begin{equation}
q(z) = \sum_{\alpha \in Z_{\mathcal{A}}} Q_{\alpha}(z,1)^{-k}
	- (-1)^{k}\sum_{\alpha \in Z_{-\mathcal{A}}} Q_{\alpha}(z,1)^{-k},
\label{eq:RPFquadraticanyk1}
\end{equation}
where each
\( \mathcal{A}_{\ell} \) 
is a \( G_{p} \)-equivalence class of 
\( \Z[\lambda_{p}] \)-BQFs.
Then \( q \)
is a rational period function 
of weight \( 2k \) on \( G_{p} \)
for any positive integer \( k \).
\end{theorem}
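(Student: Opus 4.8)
The plan is to verify directly that $q$ satisfies the two defining relations \eqref{eq:first_relation} and \eqref{eq:second_relation}, with Lemma \ref{lemma:quadraticslash} as the main computational tool. Throughout I will use two structural facts from Section \ref{sec:background}: first, that $T\mathcal{Z}_{\mathcal{A}} = \mathcal{Z}_{-\mathcal{A}}^{\prime}$ and likewise $T\mathcal{Z}_{-\mathcal{A}} = \mathcal{Z}_{\mathcal{A}}^{\prime}$; and second, that $Q_{\alpha^{\prime}} = -Q_{\alpha}$, which follows from $M_{\alpha^{\prime}} = M_{\alpha}^{-1}$ by writing out the form attached to the inverse matrix. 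Since $(-1)^{-k} = (-1)^{k}$, the latter gives $Q_{\alpha^{\prime}}^{-k} = (-1)^{k} Q_{\alpha}^{-k}$, so the two poles of each summand carry principal parts differing only by the sign $(-1)^{k}$. I also record at the outset that $q$ is rational with poles only at hyperbolic fixed points and that $q$ vanishes to order at least $2k$ at $\infty$, since each $Q_{\alpha}(z,1)^{-k} \sim A^{-k} z^{-2k}$.

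For the first relation, apply Lemma \ref{lemma:quadraticslash} with $M = T = T^{-1}$ to get $q \mid T = \sum_{\alpha \in Z_{\mathcal{A}}} Q_{T\alpha}^{-k} - (-1)^{k}\sum_{\alpha \in Z_{-\mathcal{A}}} Q_{T\alpha}^{-k}$. Using $T\mathcal{Z}_{\mathcal{A}} = \mathcal{Z}_{-\mathcal{A}}^{\prime}$, each $T\alpha$ with $\alpha \in Z_{\mathcal{A}}$ equals $\beta^{\prime}$ for a unique $\beta \in Z_{-\mathcal{A}}$, so $Q_{T\alpha}^{-k} = Q_{\beta^{\prime}}^{-k} = (-1)^{k} Q_{\beta}^{-k}$; symmetrically $T\mathcal{Z}_{-\mathcal{A}} = \mathcal{Z}_{\mathcal{A}}^{\prime}$ rewrites the second sum over $Z_{\mathcal{A}}$. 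Substituting, the factors of $(-1)^{k}$ combine so that the $\mathcal{A}$- and $-\mathcal{A}$-sums interchange with the correct signs, yielding $q \mid T = -q$. The coefficient $-(-1)^{k}$ in \eqref{eq:RPFquadraticanyk1} is precisely what makes this cancellation succeed for every $k$.

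For the second relation, set $F = \sum_{t=0}^{p-1} q \mid U^{t}$. Because $U^{p} = I$, reindexing gives $F \mid U = F$, so any poles of $F$ occur in $U$-orbits. The decay of $q$ forces $F$ to be holomorphic at $\infty$ with $F(\infty) = 0$: for $1 \leq t \leq p-1$ the automorphy factor $(\gamma_{t}z - \gamma_{t-1})^{-2k}$ of $q \mid U^{t}$ has a pole of order $2k$ at $z = U^{-t}(\infty)$, but the order-$2k$ vanishing of $q$ at $\infty$ cancels it exactly. This is the point where the hypothesis here differs from the setting of Lemma \ref{lemma:q2slash}, in which the pole-at-zero part $r$ vanishes to order less than $2k$ and a pole survives; the role of Lemma \ref{lemma:q2slash} here is to confirm that the exceptional points $U^{j}(0)$ contribute no poles, since such poles arise only from a nontrivial $r$, which our $q$ lacks. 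By Lemma \ref{lemma:quadraticslash} the only remaining candidate poles of $F$ are the hyperbolic points $U^{-t}\eta$ with $\eta \in P(q)$.

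It then remains to show $F$ has no finite poles, and this is the heart of the argument. Fix a candidate hyperbolic pole $\xi$. Collecting terms via Lemma \ref{lemma:quadraticslash}, the principal part of $F$ at $\xi$ is a single multiple $c_{\xi}\,PP_{\xi}[Q_{\xi}^{-k}]$, where $c_{\xi}$ records, with signs dictated by the $(-1)^{k}$ factors and by $Q_{\xi^{\prime}}^{-k} = (-1)^{k}Q_{\xi}^{-k}$, how often the length-$p$ orbit $\{U^{t}\xi\}$ meets the simple sets $\mathcal{Z}_{\mathcal{A}}$ and $\mathcal{Z}_{-\mathcal{A}}$ and their Hecke-conjugates. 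Showing $c_{\xi} = 0$ for every $\xi$ is the main obstacle: it reduces to a counting identity for the intersection of an elliptic $U$-orbit with the simple region, which I expect to establish from the reduction theory of $\lambda$-BQFs in \cite{MR2582982}, in particular the cycle structure of the simple forms in a class and its compatibility with the $U$-action. Once $c_{\xi} = 0$ for all $\xi$, the function $F$ is rational with no poles in $\C \cup \{\infty\}$ and $F(\infty) = 0$, hence $F \equiv 0$, which is exactly \eqref{eq:second_relation}.
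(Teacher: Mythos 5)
Your verification of the first relation \eqref{eq:first_relation} is correct and complete: Lemma \ref{lemma:quadraticslash} with \( M = T \), the identity \( T\mathcal{Z}_{\mathcal{A}} = \mathcal{Z}_{-\mathcal{A}}^{\prime} \) from the background section, and \( Q_{\alpha^{\prime}} = -Q_{\alpha} \) do combine exactly as you say to give \( q \mid T = -q \), and your reduction of the second relation \eqref{eq:second_relation} to the vanishing of the coefficients \( c_{\xi} \) at hyperbolic points is also sound (the handling of \( \infty \) and of the parabolic points \( U^{s}(0) \) is correct, since \( q \) vanishes at \( \infty \) to order at least \( 2k \)). But the argument then stops at precisely the step that carries all the content: you write that the counting identity \( c_{\xi} = 0 \) is one you ``expect to establish'' from the reduction theory of \( \lambda \)-BQFs. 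Nothing in the proposal proves it, and it is not a routine verification: showing that the signed count of intersections of the length-\( p \) orbit \( \{U^{t}\xi\} \) with \( \mathcal{Z}_{\mathcal{A}} \), \( \mathcal{Z}_{-\mathcal{A}} \), and their Hecke-conjugates always cancels, with the signs \( (-1)^{k} \) cooperating for every \( k \) and every hyperbolic \( \xi \), is essentially the combinatorial core of the characterization theorem of \cite{MR1876701}. As submitted, the proposal is an unfinished direct verification with a genuine gap at its central step.

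The paper avoids this obstacle entirely by not verifying the relations from scratch. It invokes Theorem \ref{thm:AnyRPFdiscription}: taking \( L=2 \), \( \mathcal{A}_{1} = \mathcal{A} \), \( \mathcal{A}_{2} = -\mathcal{A} \), \( C_{1} = D^{-k/2} \), \( C_{2} = (-1)^{k+1}D^{-k/2} \), \( c_{0} = 0 \) in \eqref{eq:RPFform} produces an RPF, and the principal parts collapse via \( q_{k,\alpha} + (-1)^{k}q_{k,\alpha^{\prime}} = D^{k/2}Q_{\alpha}(z,1)^{-k} \) to the target expression \eqref{eq:RPFquadraticanyk1} plus a residual tail \( \sum_{n=1}^{2k-1}c_{n}z^{-n} \). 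The whole burden of the proof is then to kill that tail: assuming it nontrivial, principal-part extraction at \( 0 \) (for the first relation) and at the parabolic points \( U^{s}(0) \) (for the second, using Lemma \ref{lemma:q2slash}) shows the tail would itself be an RPF with pole only at zero, contradicting \eqref{eq:RPFPoleatZero}. So the hyperbolic-point cancellation you are missing is inherited from the prior characterization rather than reproved. Note also that in your argument Lemma \ref{lemma:q2slash} is idle --- your \( q \) has no \( z^{-n} \) tail, so direct order-counting at the parabolic points suffices --- whereas in the paper that lemma is essential, being the tool that isolates the tail. To complete your proof you must either supply the orbit-counting identity (in effect redoing the main theorem of \cite{MR1876701}) or, as the paper does, route the argument through Theorem \ref{thm:AnyRPFdiscription}.
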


\begin{remark}
\mbox{}
\begin{enumerate}
\renewcommand{\labelenumi}{(\roman{enumi})}
\item 
This generalizes the 
class of RPFs
on the modular group given
in Theorem 3.2 of \cite{MR1045397},
which (with an appropriate change of notation)
is also given
in Theorem 3 of \cite{MR2734690}.
\item 
We can say more about the RPF in \eqref{eq:RPFquadraticanyk1}
for specific cases.
\begin{itemize}
\item 
If \( \mathcal{A} = -\mathcal{A} \) and \( k \) is odd
then 
\begin{equation*}
q(z) = 2\sum_{\alpha \in Z_{\mathcal{A}}} Q_{\alpha}(z,1)^{-k},
\end{equation*}
which is contained in
the expression in Theorem \ref{thm.SymmetricISPcorrected}.
This RPF has only one ISP and it is Hecke-symmetric.
\item 
If \( \mathcal{A} = -\mathcal{A} \) and \( k \) is even
then \( q(z) \equiv 0 \).
\item 
If \( \mathcal{A} \neq -\mathcal{A} \) (for any \( k \))
then 
\( q \) has two non-symmetric ISPs
\( P_{\mathcal{A}} \) and \( P_{-\mathcal{A}} \),
but the complete set of poles
\( P_{\mathcal{A}} \cup P_{-\mathcal{A}} \) 
is Hecke-symmetric.
\end{itemize}
\end{enumerate}
\end{remark}

\begin{proof}
We choose \( L = 2 \) in \eqref{eq:RPFform},
put \( \mathcal{A}_{1} = \mathcal{A} \)
and \( \mathcal{A}_{2} = -\mathcal{A} \),
and note that 
\( \mathcal{A}_{1} \) and \( \mathcal{A}_{2} \)
have the same discriminant \( D \).
If we put
\( C_{1} = D^{-k/2} \),
\( C_{2} = (-1)^{k+1}D^{-k/2} \),
and \( c_{0} = 0 \)
the RPF
given by \eqref{eq:RPFform}
is
\begin{align}
q(z)
  & = D^{-k/2}
	  \left( \sum_{\alpha \in Z_{\mathcal{A}}} q_{k,\alpha}(z) 
      - \sum_{\alpha \in Z_{-\mathcal{A}}} q_{k,\alpha^{\prime}}(z) \right) \nonumber \\
  & \hspace{.5cm} + (-1)^{k+1}D^{-k/2}
	  \left( \sum_{\alpha \in Z_{-\mathcal{A}}} q_{k,\alpha}(z) 
      - \sum_{\alpha \in Z_{\mathcal{A}}} q_{k,\alpha^{\prime}}(z) \right)
      + \sum_{n=1}^{2k-1}\frac{c_{n}}{z^{n}} \nonumber \\
  & = D^{-k/2}\sum_{\alpha \in Z_{\mathcal{A}}} 
  \left( q_{k,\alpha}(z) + (-1)^{k} q_{k,\alpha^{\prime}}(z) \right) \nonumber \\
  & \hspace{.5cm} - D^{-k/2}\sum_{\alpha \in Z_{-\mathcal{A}}} 
  \left( (-1)^{k} q_{k,\alpha}(z) + q_{k,\alpha^{\prime}}(z) \right)
    + \sum_{n=1}^{2k-1}\frac{c_{n}}{z^{n}}.
\label{eq:RPFquadraticanyk2}
\end{align}
The functions 
\( q_{k,\alpha}(z) \) and \( q_{k,\alpha^{\prime}}(z) \)
are given by 
\eqref{eq:PPatalpha},
and for \( q_{k,\alpha^{\prime}}(z) \)
we calculate that
\begin{align*}
q_{k,\alpha^{\prime}}(z)
& = PP_{\alpha^{\prime}} 
         \left[ \frac{(\alpha^{\prime}-\alpha)^{k}}
	         {(z-\alpha^{\prime})^{k}(z-\alpha)^{k}}
                 \right] 
 = (-1)^{k}PP_{\alpha^{\prime}} 
         \left[ \frac{(\alpha-\alpha^{\prime})^{k}}{(z-\alpha)^{k}(z-\alpha^{\prime})^{k}}
                 \right],
\end{align*}
so
\begin{equation*}
q_{k,\alpha}(z) + (-1)^{k}q_{k,\alpha^{\prime}}(z)
	= \frac{D^{k/2}}{Q_{\alpha}(z,1)^{k}}.
\end{equation*}
With this \eqref{eq:RPFquadraticanyk2} becomes
\begin{equation*}
q(z) = \sum_{\alpha \in Z_{\mathcal{A}}} Q_{\alpha}(z,1)^{-k}
	- (-1)^{k}\sum_{\alpha \in Z_{-\mathcal{A}}} Q_{\alpha}(z,1)^{-k}
	+ \sum_{n=1}^{2k-1}\frac{c_{n}}{z^{n}}.
\label{eq:RPFquadraticanyk3}
\end{equation*}
It remains to show that 
\( \sum_{n=1}^{2k-1}\frac{c_{n}}{z^{n}} = 0 \).
To that end we put
\begin{equation*}
q_{1}(z) 
	= \sum_{\alpha \in Z_{\mathcal{A}}} Q_{\alpha}(z,1)^{-k}
	- (-1)^{k}\sum_{\alpha \in Z_{-\mathcal{A}}} Q_{\alpha}(z,1)^{-k},
\end{equation*}
and 
\begin{equation*}
q_{2}(z) 
	= \sum_{n=1}^{2k-1}\frac{c_{n}}{z^{n}},
\end{equation*}
so that
\( q(z) = q_{1}(z) + q_{2}(z) \).
We suppose by way of contradiction
that \( q_{2} \)
is nontrivial.
We will show that 
this implies that \( q_{2} \) is itself an RPF,
which contradicts
the fact that 
\( q_{2} \)
must have the form \eqref{eq:RPFPoleatZero}.

Now
\( q_{1} \) and \( q_{1} \mid T \)
do not have poles at \( z=0 \),
by Lemma \ref{lemma:quadraticslash}.
On the other hand,
the only pole
in \( \C \cup \{ \infty \} \)
for
\( q_{2} \)
and
\( q_{2}\mid T \)
is at \( z=0 \).
We use these observations along with
the fact that \( q \) satisfies the first relation \eqref{eq:first_relation}
to calculate
\begin{align*}
0
& = PP_{0}\left[ q + q \mid T \right] \\
& = PP_{0}\left[ q_{2} + q_{2} \mid T \right] \\
& = q_{2} + q_{2} \mid T,
\end{align*}
so \( q_{2} \) also satisfies the first relation \eqref{eq:first_relation}.

Suppose
\( 1 \leq t \leq p-1 \).
Now \( q_{1} \)
and 
\( q_{1}\mid U^{t} \)
(by Lemma \ref{lemma:quadraticslash})
have poles only at hyperbolic points.
On the other hand,
the only poles 
in \( \C \cup \{ \infty \} \)
for
\( q_{2} \)
and 
\( q_{2}\mid U^{t} \)
(by Lemma \ref{lemma:q2slash})
are at one or two of the parabolic points
\( \left\{ U^{s}(0): 2 \leq s \leq p \right\}  \).
We use these observations along with
the fact that \( q \) satisfies the second relation \eqref{eq:second_relation}
to calculate
\begin{align*}
0
& = \sum_{2 \leq s \leq p} PP_{U^{s}(0)}
	\left[ q + q \mid U + \cdots q \mid U^{p-1} \right] \\
& = \sum_{2 \leq s \leq p} PP_{U^{s}(0)}
	\left[ q_{2} + q_{2} \mid U + \cdots q_{2} \mid U^{p-1} \right] \\
& = q_{2} + q_{2} \mid U + \cdots q_{2} \mid U^{p-1},
\end{align*}
so \( q_{2} \) also satisfies the second relation \eqref{eq:second_relation}.
But then
\( q_{2} \) must be an RPF,
which is a contradiction.
Thus \( q_{2}(z)\equiv 0 \),
and  \( q \) has the form 
given in the statement of the theorem.
\end{proof}

\section{An RPF with a non-symmetric pole set}
\label{sec:RPFs_non-symmetric}

If \( \mathcal{A} \)
is an equivalence class of 
\( \lambda \)-BQFs
with \( \mathcal{A} \neq -\mathcal{A} \),
we can 
use \eqref{eq:RPFform}
to write a rational period function
that has a single nonzero
irreducible system of poles 
\( P_{\mathcal{A}} 
	= \mathcal{Z}_{\mathcal{A}} \cup \mathcal{Z}_{-\mathcal{A}^{\prime}} \),
which does not have Hecke-symmetry.
We illustrate this with an example.

\begin{example}
Put \( \lambda = \lambda_{4} = \sqrt{2} \)
and consider the \( \lambda_{4} \)-BQFs
of discriminant \( D=14 \).
We let \( \mathcal{A} \)
be the equivalence class \( \mathcal{A}_{1} \)
from Example \ref{ex:RPF_nonsym_ISP},
so that (using the notation from Example \ref{ex:RPF_nonsym_ISP})
\begin{align*}
P_{\mathcal{A}}
& = \mathcal{Z}_{\mathcal{A}} \cup \mathcal{Z}_{-\mathcal{A}^{\prime}} \\
& =  \left\{ \alpha_{1}, \alpha_{2}, \beta_{1}^{\prime}, 
		\beta_{2}^{\prime} \right\}, \\
& =  \left\{ \frac{\sqrt{2}+\sqrt{14}}{2}, \frac{-\sqrt{2}+\sqrt{14}}{2},
		\frac{\sqrt{2}-\sqrt{14}}{6}, \frac{-\sqrt{2}-\sqrt{14}}{6} \right\}.
\end{align*}
We put \( L=1 \),
\( C_{1} = 0 \) and \( c_{0} = 0 \)
in \eqref{eq:RPFform}
and have the RPF on \( G_{4} \)
\begin{align*}
q(z)
& = \sum_{\alpha \in Z_{\mathcal{A}}} q_{k,\alpha}(z) 
		- \sum_{\alpha \in Z_{-\mathcal{A}}} q_{k,\alpha^{\prime}}(z)
	    + \sum_{n=1}^{2k-1}\frac{c_{n}}{z^{n}} \\
& = q_{k,\alpha_{1}}(z) + q_{k,\alpha_{2}}(z) 
		- q_{k,\beta_{1}^{\prime}}(z) - q_{k,\beta_{2}^{\prime}}(z)
	    + \sum_{n=1}^{2k-1}\frac{c_{n}}{z^{n}}.
\end{align*}
We let \( k=1 \)
and calculate that
\begin{align*}
q(z)
& = \frac{1}{z-\alpha_{1}} + \frac{1}{z-\alpha_{2}}
	- \frac{1}{z-\beta_{1}^{\prime}} - \frac{1}{z-\beta_{2}^{\prime}} 
	+ \frac{c_{1}}{z}.
\end{align*}
We have from \eqref{eq:RPFPoleatZero}
that
\( \frac{c_{1}}{z} \)
is itself an RPF of weight \( 2 \) on \( G_{4} \)
so we may let \( c_{1} = 0 \), and 
\begin{align*}
q(z)
& = \frac{1}{z-\alpha_{1}} + \frac{1}{z-\alpha_{2}}
	- \frac{1}{z-\beta_{1}^{\prime}} - \frac{1}{z-\beta_{2}^{\prime}}  \\
& = \left( z - \frac{\sqrt{2}+\sqrt{14}}{2} \right)^{-1} 
	+ \left( z - \frac{-\sqrt{2}+\sqrt{14}}{2} \right)^{-1} \\
&\hspace{2cm}	- \left( z - \frac{\sqrt{2}-\sqrt{14}}{6} \right)^{-1}  
	+ \left( z - \frac{-\sqrt{2}-\sqrt{14}}{6} \right)^{-1}
\end{align*}
is an RPF of weight \( 2 \) on \( G_{4} \)
with 
one ISP
which does not have Hecke-symmetry.
\end{example}

\bibliographystyle{alpha}    

\bibliography{WRLibraryMR,WRLibraryNotMR}

\end{document}